\numberwithin{equation}{section}
\newtheorem{theorem}{Теорема}
\author{
{\bfseries Р.\,А.\,Ведерников\,$^{\it{a}\,*}$,
А.\ В.\, Рогозин$^{\it{a}\,**}$,
А.\ В.\, Гасников,$^{\it{b,c}\,***},$
}
\\ {\itshape $^{\it{a}}$\,Московский физико-технический институт}
\\ {\slshape 141701,} {\itshape г. Долгопрудный, Институтский пер.,~9}
\\ {\itshape $^{\it{b}}$\,Институт проблем передачи информации}
\\ {\slshape 127051,} {\itshape г. Москва, Большой Каретный пер., 19, стр. 1}
\\ {\itshape $^{\it{c}}$\,Кавказский математический центр Адыгейского государственного университета}
\\ {\slshape 385000,} {\itshape г. Майкоп, Первомайская ул., 208}
\\ {\itshape *\,e-mail: vedernikov.ra@phystech.edu \\
**\,e-mail: aleksandr.rogozin@phystech.edu \\
***\,e-mail: gasnikov@yandex.ru}}
\title{ДЕЦЕНТРАЛИЗОВАННЫЙ МЕТОД УСЛОВНОГО ГРАДИЕНТА НА ПЕРЕМЕННЫХ ВО ВРЕМЕНИ ГРАФАХ}
\date{}
\newtheorem{lemma}{Лемма}
\newtheorem{assumption}{Предположение}
\newtheorem{corollary}{Следствие}
\renewcommand{\phi}{\varphi}
\renewcommand{\epsilon}{\varepsilon}
\renewcommand{\leq}{\leqslant}
\renewcommand{\geq}{\geqslant}
\newcommand{\E}{\mathbb{E}}
\newcommand{\R}{\mathbb{R}}
\newcommand{\norm}[1]{\left\| #1 \right\|}
\newcommand{\cbraces}[1]{\left( #1 \right)}
\newcommand{\sbraces}[1]{\left[ #1 \right]}
\newcommand{\circledOne}{\text{\ding{172}}}
\newcommand{\rev}[1]{{\color{black}#1}}
\begin{document}

\maketitle

\section*{Введение}\label{sec1_introduction}

Алгоритм Франк-Вульфа \cite{cgm}, (также известен как метод условного градиента или метод Левитина-Поляка \cite{levitin-polyak}) - итеративный алгоритм оптимизации, который часто используется для решения задач выпуклой оптимизации. Он был представлен Маргаритой Франк и Филиппом Вульфом в 1956 году.

\begin{algorithm}[!ht]
\caption{Классический метод условного градиента}\label{classical_FW}
\begin{algorithmic}[1]
   \REQUIRE Количество итераций $m$, начальная точка $x_{0} \in Q$.
   \FOR{$t=0, 1, \ldots, m-1$}
    \STATE $\alpha_t = \frac{2}{t+1}$
    \STATE $s_t = \arg\min_{x \in Q}\left\{  \nabla f(x_t)^\top x \right\}$
    \STATE $x_{t+1} = (1-\alpha_t) x_t + \alpha_t s_t$
    \ENDFOR
\ENSURE $x_m$
\end{algorithmic}
\end{algorithm}

Основная идея алгоритма Франк-Вульфа заключается в следующем:

Алгоритм инициализируется в пределах допустимого множества $D$. После этого начинаются итерации алгоритма: на каждой итерации мы приближаем целевую функцию линейной функцией в окрестности текущей точки, и ищем точку из допустимого множества, проекция которой на направление антиградиента будет максимальной. Эта точка задает направление шага алгоритма, которое, вообще говоря, может не совпадать с направлением антиградиента, что отличает его от градиентного спуска.

После выбора направления движения, есть два основных способа задать величину шага. Первый - выбрать шаг заранее и задать функцией от номера итерации:
\begin{equation}
    \gamma_t = \dfrac{2}{t+2},
\end{equation}

Второй способ - техника short step rule, которая заключается в решении задачи минимизации функции на выбранном направлении по допустимому множеству на каждой итерации: 
\begin{equation}
    \gamma_t = \underset{\gamma\geq 0}{\text{argmin}}~ f(x_t + \gamma (s_t - x_t)).
\end{equation}

Подзадача линейной минимизации часто проще, чем исходная задача. Алгоритм может быть особенно полезен, когда допустимая область является компактным и выпуклым множеством в пространстве большой размерности. 

В работе рассматривается приложение алгоритма Франк-Вульфа к решению задач на сетях, в силу особенностей топологии не имеющих общего распределяющего центра и требующих применения децентрализованного алгоритма.


\section{Децентрализованная оптимизация}
\subsection{Постановка задачи}

Рассмотрим произвольную систему из $N$ узлов. Узлы могут обмениваться информацией через переменную во времени сеть, каждый узел связан с некоторыми другими, может передавать и получать информацию только от них. Таким образом, систему можно представить последовательностью неориентированных графов  $G^t = (V, E^t)$, причем в ней нет главного узла, который мог бы аггрегировать информацию. Будем требовать, чтобы на каждой итерации граф системы оставался связным. 

Будем рассматривать задачу минимизации суммы функции:
\begin{equation}
    \min_{x\in D} f(x) = \dfrac{1}{N} \sum^N_{i = 1} f_i(x).
\end{equation}

Эта задача относится к задачам децентрализованной оптимизации. Каждый $i-\text{й}$ узел хранит состояние $x$, умеет вычислять свою функцию $f_i(x)$, а так же ее градиент $\nabla f(x)$ в этой точке.

Задачи такого типа имеют множество приложений в областях, где ограничена или невозможна аггрегация информации из-за ограничений безопасности, архитектуры сети или размеров данных, например, в распределенном машинном обучении \cite{Nedic} \cite{distributed_ml}, системах контроля мощностей \cite{charging} \cite{powercontrol}, контроле и управлении техникой \cite{vehicle}.

\subsection{Коммуникационная матрица}

Важную роль в алгоритмах децентрализованной оптимизации играет процесс консенсуса, который реализует обмен информации между узлами.

Коммуникационная матрица \cite{rogozin} - это матрица, используемая в этом процессе, где каждый элемент представляет собой силу связи или вес между двуми узлами. Коммуникационная матрица  является важной частью алгоритма консенсуса, который помогает всем узлам достичь согласия относительно оптимального решения.

В процессе децентрализованной оптимизации алгоритм консенсуса работает следующим образом:

\begin{itemize}
  \item Каждый узел начинает с начальной оценки решения.
  \item Каждый узел сообщает свою оценку своим соседям.
  \item Каждый узел обновляет свою оценку на основе полученной от своих соседей информации, взвешенной согласно коммуникационной матрице.
  
\end{itemize}

Этот процесс повторяется до тех пор, пока все узлы не достигнут консенсуса, то есть их оценки не сойдутся к одному и тому же значению.
Веса в коммуникационной матрице могут быть скорректированы в соответствии с потребностями системы, например, чтобы дать больший вес оценкам узлов, которые известны своей большей точностью или надежностью. 

На последовательность коммуникационных матриц накладываются следующие условия:
\begin{assumption}\label{assum:mixing_matrix_sequence}
	Для каждого $t = 0, 1, \ldots$ выполняется
	\begin{enumerate}
		\item (Согласованность с графом) $[W^t]_{ij} = 0$ if $(i, j)\notin E^t$ и $i\neq j$.
		\item (Дважды стохастичность) $W^t\mathbf{1} = \mathbf{1},~ \mathbf{1}^\top W^t = \mathbf{1}^\top$.
		 \item (Свойство сжатия) Найдется такое $\lambda< 1$, что для любого $t = 0, 1, \ldots$ выполняется
		 \begin{align*}
		 	\norm{\cbraces{W^t - \frac{1}{N}\mathbf{1}\mathbf{1}^\top}x}\leq \lambda\norm{x}.
		 \end{align*}
	\end{enumerate}
\end{assumption}

Построение коммуникационной матрицы такой, чтобы она обладала свойством сжатия, может быть неочевидным. Приведем достаточные условия для выполнения данного свойства. Пусть для любого $t = 0, 1, \ldots$ выполняется:\\
1. $W^t \mathbf{1} = \mathbf{1}, \   \mathbf{1}^\top W^t = \mathbf{1}^\top $. \\
2. Для любого $i = 1, \ldots, N$ выполняется $[W^t]_{ii} > 0$.\\
3. Если $(i,j) \in E^t$, то $[W^t]_{ij} > 0$, иначе  $[W^t]_{ij} = 0$.\\
4. Существует $\theta> 0$, такое что если $ [W^t]_{ij} > 0$, то $[W^t]_{ij} \geq \theta$.

В этой работе мы будем использовать способ выбора весов Metropolis Weights, который имеет следующий вид:

\begin{equation}\label{eq:metropolis_weights}
\left[ W^t \right]_{ij} = 
 \begin{cases}
   \dfrac{1}{\text{max(deg(i), deg(j))} + 1}, 
&\text{$(i,j) \in E^t$,} \\
   0, &\text{$(i,j) \notin E^t,$}\\
   1 - \sum_{i \neq j } \left[ W^t  \right]_{ij}, &\text{$i = j$}.
 \end{cases}
\end{equation}

\subsection{Построение алгоритма}

Децентрализованный алгоритм Франк-Вульфа строится из его классической версии \cite{base}.
Пусть $t \in \mathbb{N}$ - номер итерации, а начальная точка 
$\theta_0 \in D$ взята из допустимого множества. 
Напомним, что $F(x) = \dfrac{1}{N} \sum\limits_{i=1}^N f_i (x^i)$ \rev{(где  $x$ -- матрица, строками которой являются $(x^i)^\top$)},
тогда централизованный алгоритм действует следующим образом:
\begin{gather}
    v_t \in \text{argmin}_{v \in D^N} \langle \nabla F (x_t), v \rangle,  \\
x_{t} = x_{t-1} + \gamma_{t-1} ( v_{t-1} - x_{t-1} ) , 
\end{gather}
где $\gamma_{t-1} \in (0,1]$ - заданная величина шага алгоритма.
Заметим, что $x_{t}$ - выпуклая комбинация $x_{t-1}$ and $v_{t-1}$ которые лежат в допустимом множестве, поэтому также принадлежит допустимому множеству.
Когда шаг алгоритма задается как $\gamma_t = 2/(t+2)$, известна оценка скорости сходимости $O(1/t)$, если $F$ является $L$-гладкой 
и выпуклой функцией.
Следующим этапом будет децентрализация алгоритма. Для этого необходимо заменить централизованные значения функции и градиента на их локальные приближения. Во-первых, определим среднее координат точек:
\begin{equation} 
    \overline{x_t} = \dfrac{1}{N} \sum\limits_{i=1}^N x^i_t
\end{equation}
и среднее градиентов в координатах узлов:

\begin{equation}
    \overline{\nabla_t F} = \dfrac{1}{N} \sum\limits_{i=1}^N \nabla f_i(\overline{x}_t^i).
\end{equation}
Эти величины понадобятся нам, чтобы оценить скорость сходимости алгоритма. Во-вторых,  определим локальные аппроксимации этих величин, доступные для вычисления в каждом узле:

Вычисление локальной аппроксимации точки консенсуса $\overline{x}_t^i $ называется шагом консенсуса (consensus step):
\begin{equation}
    \overline{x}_t^i = \sum\limits_{j=1}^N W^t_{ij} \cdot x^j_t.
\end{equation}
Здесь реализуется связь соседних узлов, причем информация из несвязанных между собой узлов этими узлами игнорируется, т.к. $W^t_{ij} = 0 , \ (i,j) \notin E^t$.

Вычисление локальной аппроксимации градиента $\overline{\nabla^i_t F }$ выполняется по другой схеме. Для этого сначала определим вспомогательный градиент:
\begin{equation}
    \nabla^i_t F = \overline{\nabla^i_{t-1} F } + \nabla f_i (\overline{x}_t^i) - \nabla f_i (\overline{x}_{t-1}^i).
\end{equation}

После того, как каждый узел посчитает свой вспомогательный градиент, выполняется шаг аггрегации (aggregate step):
\begin{equation}
    \overline{\nabla^i_{t} F } = \sum\limits_{j=1}^N W^t_{ij} \cdot \nabla^j_t F.
\end{equation}

Наконец, приведем описание децентрализованного алгоритма Франк-Вульфа:

\begin{algorithm}[h]
\caption{Децентрализованный метод условного градиента}\label{alg:Example}
\label{alg:decentralized_fw}
\begin{algorithmic}[1]
  \REQUIRE \rev{Начальные точки $x_0^i \in D$ ($i = 1, \ldots, N$)}, целевая функция $F$, константа гладкости $L$.
  \FOR { $t=0, 1, \ldots$}
  \STATE $\text{Консенсусный шаг:} $
  \begin{equation*}
      \overline{x}^i_t \longleftarrow \sum\limits_{j=1}^N W^t_{ij} \cdot x^j_t \text{, } \forall i \in V 
  \end{equation*}
  \STATE $\text{Шаг агрегации:} $
  \begin{equation*}
      \overline{\nabla^i_{t} F } \longleftarrow \sum\limits_{j=1}^N W^t_{ij} \cdot \nabla^j_t F \text{, } \forall i \in V 
  \end{equation*}
  \STATE $v_t^i \longleftarrow \text{argmin}_{v \in D} \langle \overline{\nabla^i_{t} F }, v \rangle$
  \STATE $\gamma_t \longleftarrow 2/(t+2)$
  \STATE $x_{t+1}^i \longleftarrow \overline{x}_t^i + \gamma_t (v_{t}^i - \overline{x}_{t}^i)$
  \ENDFOR
\end{algorithmic}
\end{algorithm}

\section{Теоретические оценки}
\subsection{Верхняя оценка скорости сходимости}\label{subsec:deterministic_changing_graph}

Как было указано выше, нам нужны величины $\overline{x_t}, \  \overline{\nabla_t F}$, чтобы следить, насколько результаты consensus step и aggregation step отличаются от среднего. Введем несколько предположений, чтобы  сделать оценку скорости сходимости алгоритма:

\begin{assumption}\label{assum:convexity_smoothness}
	Для каждого $i = 1, \ldots, N$ функция $f_i$ является выпуклой $L$-гладкой, т.е. для всяких $x, y\in D$ выполняется
	\begin{align*}
		0\leq f_i(y) - f_i(x) - \left\langle \nabla f_i(x), y - x \right\rangle\leq \frac{L}{2}\norm{y - x}_2^2.
	\end{align*}
\end{assumption}

\begin{assumption}\label{assum:bounded_arg}
    Пусть существует $(\{ \Delta p_t\}_{t\geq 1})$, \ $\forall t \geq 1$ неотрицательная последовательность такая, что $\Delta p_t \longrightarrow 0$ и
\begin{equation}
    \max_{i \in [N]} \| \overline{x}_t^{i} - \overline{x}_t \|_2 \leq \Delta p_t.
\end{equation}
\end{assumption}

\begin{assumption}\label{assum:bounded_grad}
    Пусть существует $\{ \Delta d_t\}_{t\geq 1}$, \ $\forall t \geq 1$ неотрицательная последовательность такая, что $\Delta d_t \longrightarrow 0$ и

\begin{equation}
\max_{i \in [N]} \| \overline{\nabla_t^i F} - \overline{\nabla_t F} \|_2 \leq \Delta d_t.
\end{equation}
\end{assumption}

Отметим, что для выполнения предыдущего предположения достаточно определять коммуникационную матрицу способом Metropolis Weights, описанном в \eqref{eq:metropolis_weights}.
Отсюда следует \cite{base} следующая оценка:
\begin{theorem}\label{th:decentralized_frank_wolfe}
Пусть выполнены предположения \ref{assum:mixing_matrix_sequence}, \ref{assum:convexity_smoothness}, \ref{assum:bounded_arg}, \ref{assum:bounded_grad}, размер шага равен $\gamma_t = 2/(t+1)$, а $C_p, \ C_g$ - положительные константы, такие что $\Delta p_t = C_p / t, \Delta d_t = C_g/t $. Тогда
\begin{equation}
    F(\overline{x}_t) - F(\overline{x}^*) \leq \cfrac{8\overline{\rho} (C_g + L C_p) + 2L\overline{\rho}^2}{t+1}
\end{equation}
для любых $t \geq 1$, где $\overline{x}^*$ - оптимальное решение задачи.
\end{theorem}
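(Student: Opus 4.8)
The plan is to track the averaged iterate $\overline{x}_t = \frac1N\sum_i x_t^i$, show it obeys a perturbed Frank--Wolfe recursion, and then close an induction on the gap $h_t \defi F(\overline{x}_t) - F(\overline{x}^*)$, where $F(\overline{x}_t) = \frac1N\sum_i f_i(\overline{x}_t)$ is the true objective evaluated at consensus. First I would average the update $x_{t+1}^i = \overline{x}_t^i + \gamma_t(v_t^i - \overline{x}_t^i)$ over $i$; double stochasticity of $W^t$ (Assumption~\ref{assum:mixing_matrix_sequence}) gives $\frac1N\sum_i \overline{x}_t^i = \overline{x}_t$, so that
\begin{equation*}
\overline{x}_{t+1} = \overline{x}_t + \gamma_t(\overline{v}_t - \overline{x}_t), \qquad \overline{v}_t \defi \tfrac1N\textstyle\sum_i v_t^i,
\end{equation*}
and since each $v_t^i, \overline{x}_t^i \in D$ with $D$ convex, all these averages remain in $D$. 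Applying $L$-smoothness of $f = \frac1N\sum_i f_i$ (Assumption~\ref{assum:convexity_smoothness}) to this scalar-step update yields
\begin{equation*}
F(\overline{x}_{t+1}) \le F(\overline{x}_t) + \gamma_t\langle \nabla f(\overline{x}_t), \overline{v}_t - \overline{x}_t\rangle + \tfrac{L\gamma_t^2}{2}\norm{\overline{v}_t - \overline{x}_t}^2 .
\end{equation*}

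The crux is to replace the inexact directions $v_t^i$ --- which minimize the \emph{local} linear model $\langle \overline{\nabla_t^i F}, \cdot\rangle$ --- by the true gradient $\nabla f(\overline{x}_t)$. For each $i$ optimality of $v_t^i$ gives $\langle \overline{\nabla_t^i F}, v_t^i\rangle \le \langle \overline{\nabla_t^i F}, \overline{x}^*\rangle$; adding and subtracting $\nabla f(\overline{x}_t)$ and invoking convexity $\langle \nabla f(\overline{x}_t), \overline{x}^* - \overline{x}_t\rangle \le F(\overline{x}^*) - F(\overline{x}_t)$, I would obtain after averaging over $i$
\begin{equation*}
\langle \nabla f(\overline{x}_t), \overline{v}_t - \overline{x}_t\rangle \le -h_t + \overline{\rho}\,\max_i \norm{\overline{\nabla_t^i F} - \nabla f(\overline{x}_t)},
\end{equation*}
where $\overline{\rho}$ bounds the diameter of $D$ (so $\norm{\overline{x}^* - v_t^i} \le \overline{\rho}$). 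The remaining mismatch splits as
\begin{equation*}
\norm{\overline{\nabla_t^i F} - \nabla f(\overline{x}_t)} \le \norm{\overline{\nabla_t^i F} - \overline{\nabla_t F}} + \norm{\overline{\nabla_t F} - \nabla f(\overline{x}_t)} \le \Delta d_t + L\,\Delta p_t,
\end{equation*}
the first bound being Assumption~\ref{assum:bounded_grad}, the second following from $\overline{\nabla_t F} = \frac1N\sum_i \nabla f_i(\overline{x}_t^i)$, Lipschitzness of the gradients, and Assumption~\ref{assum:bounded_arg}.

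Combining the three displays and using $\norm{\overline{v}_t - \overline{x}_t} \le \overline{\rho}$ gives the master recursion
\begin{equation*}
h_{t+1} \le (1-\gamma_t)h_t + \gamma_t\,\overline{\rho}(\Delta d_t + L\Delta p_t) + \tfrac{L\gamma_t^2}{2}\overline{\rho}^2 .
\end{equation*}
Substituting $\gamma_t = 2/(t+1)$, $\Delta d_t = C_g/t$, $\Delta p_t = C_p/t$ turns this into $h_{t+1} \le \frac{t-1}{t+1}h_t + \frac{2\overline{\rho}(C_g+LC_p)}{t(t+1)} + \frac{2L\overline{\rho}^2}{(t+1)^2}$, and I would finish by induction on the ansatz $h_t \le A/(t+1)$ with $A = 8\overline{\rho}(C_g+LC_p) + 2L\overline{\rho}^2$. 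Writing $B = 2\overline{\rho}(C_g+LC_p)$, $C = 2L\overline{\rho}^2$ (so $A = 4B+C$) and using $\frac{1}{t(t+1)} \le \frac{2}{(t+1)^2}$ for $t\ge1$, the inductive step reduces to the elementary inequality $(2B+C)(t+2) \le A(t+3)$, which holds because $2B+C \le 4B+C = A$ and $t+2 \le t+3$.

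I expect the main obstacle to be the second paragraph: the careful bookkeeping that converts per-node linear-oracle optimality against the tracked gradient $\overline{\nabla_t^i F}$ into a bound against the true averaged gradient $\nabla f(\overline{x}_t)$, and in particular the clean separation of the error into a consensus part ($L\Delta p_t$, through smoothness) and a gradient-tracking part ($\Delta d_t$). Checking the base case of the induction (controlling $h_1$ by the diameter, noting that $\gamma_1 = 1$ kills the first contraction factor) is a minor technical point, and the $O(1/t)$ decay itself is then routine.
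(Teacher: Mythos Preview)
Your proposal is correct and follows exactly the standard argument: average the update using double stochasticity, apply $L$-smoothness to the averaged iterate, convert per-node linear-oracle optimality into a bound against $\nabla f(\overline{x}_t)$ via the splitting $\Delta d_t + L\Delta p_t$, and close the $O(1/t)$ induction on the resulting perturbed Frank--Wolfe recursion. The paper itself does not prove Theorem~\ref{th:decentralized_frank_wolfe} --- it simply cites the result from~\cite{base} (Wai et al.), whose proof is precisely the scheme you outline; your handling of the constants and the inductive step $(2B+C)(t+2)\le A(t+3)$ matches the stated bound, and the only loose end (the base case $h_1\le A/2$, tied to the indexing of $\gamma_t$) is indeed a minor bookkeeping issue rather than a gap in the argument.
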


Таким образом, при выполнении предположений получаем линейную оценку скорости сходимости алгоритма.

Сформулируем и докажем леммы, которые гарантируют выполнение вышеизложенных предположений. \rev{Леммы приведем в общем виде для произвольного параметра $\alpha\in(0, 1]$, хотя нам понадобится только $\alpha = 1$.}
\begin{lemma}\label{lem:estimate_cp}
Пусть $t_0(\alpha)$ - наименьшее положительное целое число такое, что 
\begin{equation}\label{eq:def_t0}
    \max\limits_{t} \lambda_2(W^t) \leq \left( \cfrac{t_0(\alpha)}{t_0(\alpha)+1} \right)^{\alpha} \cdot \cfrac{1}{1 + (t_0(\alpha))^{-\alpha}}.
\end{equation}
Зададим шаг $\gamma_t = 1/t^{\alpha}$ в алгоритме Франк-Вульфа для $\alpha \in (0,1]$, тогда выполняется:
\begin{gather}
    \max_{i \in V} ||\overline{x}^i_t - \overline{x}_t||_2 \leq \Delta p_t = C_p/t^{\alpha}, \ \forall t \geq 1, \nonumber \\
    C_p = (t_0(\alpha))^{\alpha} \cdot \sqrt{N} \overline{\rho} \label{eq:def_cp}.
\end{gather}

\end{lemma}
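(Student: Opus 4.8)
The plan is to recast the consensus recursion in matrix form and reduce the claim to a scalar inequality that closes by induction. Let $X_t$ be the $N\times d$ matrix whose rows are $(x_t^i)^\top$, let $V_t$ be the matrix whose rows are $(v_t^i)^\top$, and set $M = I - \tfrac{1}{N}\mathbf{1}\mathbf{1}^\top$. The consensus outputs $\overline{x}_t^i$ are the rows of $W^t X_t$, while $\overline{x}_t$ is their common average: double stochasticity gives $\tfrac{1}{N}\mathbf{1}^\top W^t X_t = \tfrac{1}{N}\mathbf{1}^\top X_t$, so the consensus step preserves the mean. Hence $\overline{x}_t^i - \overline{x}_t$ is the $i$-th row of $\cbraces{W^t - \tfrac{1}{N}\mathbf{1}\mathbf{1}^\top}X_t$, and since the maximal row norm is dominated by the Frobenius norm it suffices to bound
\[
  p_t \defi \norm{\cbraces{W^t - \tfrac{1}{N}\mathbf{1}\mathbf{1}^\top} X_t}_F \leq C_p / t^\alpha .
\]

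First I would establish a one-step recursion for $p_t$. In matrix form the update reads $X_{t+1} = (1-\gamma_t)W^t X_t + \gamma_t V_t$. Using $\mathbf{1}^\top W^t = \mathbf{1}^\top$ one checks $M W^t = W^t - \tfrac{1}{N}\mathbf{1}\mathbf{1}^\top$, and that $W^{t+1} - \tfrac{1}{N}\mathbf{1}\mathbf{1}^\top$ annihilates the mean component, so $\cbraces{W^{t+1}-\tfrac{1}{N}\mathbf{1}\mathbf{1}^\top}X_{t+1} = \cbraces{W^{t+1}-\tfrac{1}{N}\mathbf{1}\mathbf{1}^\top}M X_{t+1}$. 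Applying the contraction property of Assumption~\ref{assum:mixing_matrix_sequence} column by column (with the constant $\lambda$, equal to $\max_t \lambda_2(W^t)$ for the symmetric Metropolis weights used here) gives $p_{t+1} \leq \lambda\norm{M X_{t+1}}_F$. Then the triangle inequality, the identity $\norm{M W^t X_t}_F = p_t$, the estimate $1-\gamma_t\leq 1$, and the fact that the rows of $M V_t$ are differences of points of $D$ from their average (hence of norm at most the diameter $\overline{\rho}$, so $\norm{M V_t}_F\leq\sqrt{N}\,\overline{\rho}$) combine to
\[
  p_{t+1} \leq \lambda\cbraces{ p_t + \gamma_t \sqrt{N}\,\overline{\rho} }.
\]

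With the recursion in hand the proof closes by induction on $p_t\leq C_p/t^\alpha$, where $C_p = (t_0(\alpha))^\alpha\sqrt{N}\,\overline{\rho}$ and $\gamma_t = 1/t^\alpha$. For the base range $1\leq t\leq t_0(\alpha)$ the consensus outputs lie in $D$, so $p_t\leq\sqrt{N}\,\overline{\rho} = C_p/(t_0(\alpha))^\alpha\leq C_p/t^\alpha$, which in particular supplies the starting case $t = t_0(\alpha)$. For the inductive step, substituting $p_t\leq C_p/t^\alpha$ and $\sqrt{N}\,\overline{\rho} = C_p/(t_0(\alpha))^\alpha$ yields $p_{t+1}\leq \lambda\,C_p\,t^{-\alpha}\cbraces{1 + (t_0(\alpha))^{-\alpha}}$, so the claim $p_{t+1}\leq C_p/(t+1)^\alpha$ follows once
\[
  \lambda\,\cbraces{1 + (t_0(\alpha))^{-\alpha}} \leq \cbraces{\tfrac{t}{t+1}}^\alpha ,\qquad t\geq t_0(\alpha).
\]
Since $\cbraces{t/(t+1)}^\alpha$ is increasing in $t$, its minimum over $t\geq t_0(\alpha)$ is at $t = t_0(\alpha)$, where this is exactly the defining inequality~\eqref{eq:def_t0}; hence the induction goes through.

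The step I expect to be the crux is producing the recursion in precisely the form $p_{t+1}\leq\lambda\cbraces{p_t+\gamma_t\sqrt{N}\,\overline{\rho}}$ rather than $p_{t+1}\leq \lambda p_t + \gamma_t\sqrt{N}\,\overline{\rho}$: the extra factor $\lambda$ on the second term arises because the contraction is applied to $X_{t+1}$ after one further mixing round, and it is exactly this factor that lets the induction threshold collapse to~\eqref{eq:def_t0} with the stated constant $C_p = (t_0(\alpha))^\alpha\sqrt{N}\,\overline{\rho}$ rather than a strictly larger one. The only other point needing care is the monotonicity argument that localizes the per-step inequality at $t = t_0(\alpha)$.
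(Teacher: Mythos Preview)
Your proof is correct and follows essentially the same induction argument as the paper: the same quantity $\big(\sum_i\|\overline{x}_t^i-\overline{x}_t\|_2^2\big)^{1/2}$ is bounded, the base range $1\leq t\leq t_0$ is handled via the diameter of $D$, the contraction of $W^{t+1}$ is applied to the centred iterate, and the inductive step reduces to the defining inequality~\eqref{eq:def_t0} after using the monotonicity of $(t/(t+1))^\alpha$. The only difference is presentational: you work in matrix/Frobenius-norm notation and invoke the triangle inequality directly, whereas the paper expands the squared norm and controls the cross-term via Cauchy--Schwarz, but both routes produce the identical recursion $p_{t+1}\leq\lambda\,t^{-\alpha}(C_p+\sqrt{N}\,\overline{\rho})$ and close it the same way.
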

\begin{proof}
В доказательстве будем писать $t_0 = t_0(\alpha)$. Покажем, что
\begin{equation} \sqrt{ \sum_{i=1}^N \| \overline{x}_{t}^{i} - \overline{{ x}}_{t} \|_2^2 } \leq \frac{C_{p}}{t^{\alpha}}, \, C_{p} = (t_{0})^{\alpha} \cdot \sqrt{N} \overline{\rho}.
\end{equation}
Заметим, что от $t = 1$ до $t = t_0$ неравенство выполняется, т.к. $\overline{x}_{t}^{i}, \ \overline{{ x}}_{t}$ принадлежат допустимому множеству, и его диаметр ограничен $\rho$. Для шага индукции предположим, что неравенство выполняется для $t \geq t_0$. По определению,
\begin{equation*}  x_{t+1}^{i} = (1-t^{-\alpha}) \overline{ x}_{t}^{i} + t^{-\alpha}  v_{t}^{i}.
\end{equation*}
Обозначим $\tilde{a}_t = \dfrac{1}{N} \sum\limits_{j=1}^N a_t^i $. Так как для $\overline{x}_i = \sum\limits^N_{j=1} W_{ij} \cdot x_j$ выполняется
\begin{equation*}
    \sqrt{\sum\limits_{i=1}^N || \overline{x}_i - \rev{\overline x}||^2} \leq |\lambda_2(W^t)| \cdot \sqrt{\sum\limits_{i=1}^N || x_i - \rev{\overline x}||^2},
\end{equation*}
то получаем 
\begin{align*}
	\sum_{i=1}^N &\| \overline{ x}_{t+1}^{i} - \overline{{ x}}_{t+1} \|_2^2
	\leq |\lambda_{2}( W^t)|^{2} \times \\
	&\times\sum_{j=1}^{N} \| (1-t^{-\alpha})(\overline{x}_{t}^{j} - \overline{x}_{t}) + t^{-\alpha}( v_{t}^{j} - \tilde{ v}_{t}) \|_{2}^{2} 
\end{align*}
Важно отметить, что показанное выше неравенство справедливо только для рассматриваемого шага $t$, так как на каждой итерации алгоритма может меняться $W^t$, а значит и $\lambda_2(W^t)$. С другой стороны, количество случайных графов на $N$ вершинах конечно, а значит, $\forall t \ \lambda_2(W^t) \leq \lambda = \max\limits_t \lambda_2(W^t)$.

В таком случае,
\begin{align*}
	\sum_{i=1}^N &\| \overline{ x}_{t+1}^{i} - \overline{{ x}}_{t+1} \|_2^2\\
	&\leq \lambda^{2} \sum_{j=1}^{N} \| (1-t^{-\alpha})(\overline{x}_{t}^{j} - \overline{x}_{t}) + t^{-\alpha}( v_{t}^{j} - \tilde{ v}_{t}) \|_{2}^{2} \\
	&\leq \lambda^2 \sum_{j=1}^N \Big( (1 - t^{-\alpha})^2\norm{\overline x_t^j - \overline x_t}_2^2 + \rho^2 t^{-2\alpha} \\
	&\qquad~~+ 2t^{-2\alpha}(1 - t^{-\alpha})^2 \rho\norm{\overline x_t^j - \overline x_t}_2 \Big) \\
	&\leq\lambda^2 \sum_{j=1}^N \Big( \norm{\overline x_t^j - \overline x_t}_2^2 + \rho^2 t^{-2\alpha} \\
	&\qquad~~+ 2\rho t^{-\alpha} \norm{\overline x_t^j - \overline x_t}_2 \Big) \\
	&\overset{\circledOne}{\leq} \lambda^2 t^{-2\alpha} (C_p^2 + N\rho^2) \\
	&\qquad~~+ 2\rho t^{-\alpha}\sqrt{N} \sqrt{\sum_{j=1}^N \norm{\overline x_t^j - \overline x_t}_2} \\
	&\leq\lambda^2 t^{-2\alpha} (C_p + \sqrt{N}\rho)
	\leq \cbraces{\lambda C_p \frac{(t_0)^\alpha + 1}{(t_0)^\alpha\cdot t^\alpha}}^2.
\end{align*}
Здесь в $\circledOne$ был использован тот факт, что для неотрицательных $c_1, \ldots, c_N\in\R$ выполняется 
$$
\sum\limits^{N}_{j = 1} c_j \leq \sqrt{N} \sqrt{\sum\limits^{N}_{j = 1} c_j^2}.
$$
Из (3.4) получаем, что:
\begin{equation}
    \lambda \cdot \dfrac{(t_0)^{\alpha} + 1}{(t_0)^{\alpha} \cdot t^{\alpha}} \leq \dfrac{1}{(t+1)^{\alpha}}, 
\end{equation}
    шаг индукции выполняется, тогда
\begin{equation} \sqrt{ \sum_{i=1}^N \| \overline{x}_{t}^{i} - \overline{{ x}}_{t} \|_2^2 } \leq \frac{C_{p}}{t^{\alpha}}, \, C_{p} = (t_{0})^{\alpha} \cdot \sqrt{N} \overline{\rho},
\end{equation} откуда следует доказываемое утверждение.
    
\end{proof}

Доказанная лемма гарантирует выполнение условия на скорость сходимости последовательности точек, в следующей лемме рассмотрим скорость сходимости последовательности градиентов.

Напомним обозначения:

\begin{equation*}
    \nabla^i_t F = \overline{\nabla^i_{t-1} F } + \nabla f_i (\overline{x}_t^i) - \nabla f_i (\overline{x}_{t-1}^i),
\end{equation*}
\begin{equation*}
    \overline{\nabla^i_{t} F } = \sum\limits_{j=1}^N W^t_{ij} \cdot \nabla^j_t F.
\end{equation*}

\begin{lemma}\label{lem:estimate_cg}
    Зададим шаг $\gamma_t = 1/t^{\alpha}$ в алгоритме Франк-Вульфа для $\alpha \in (0,1]$, каждая из функций $f_i$ $L-$гладкая, тогда выполняется:
\begin{gather}
     \max \| \overline{\nabla_{t}^{i} F} - \overline{\nabla_{t} F} \|_{2} \leq \frac{C_{g}}{t^{\alpha}}, \nonumber \\
     C_{g} = 2\sqrt{N}(t_{0})^{\alpha} (2C_{p} + \overline{\rho})L \label{eq:def_cg}.
\end{gather}

\end{lemma}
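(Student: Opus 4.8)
The plan is to mirror the structure of the proof of Lemma~\ref{lem:estimate_cp}: identify the quantity that the aggregation step contracts, then close an inductive bound of the form $D_t\le C_g/t^\alpha$, where $D_t:=\sqrt{\sum_{i=1}^N\norm{\overline{\nabla^i_t F}-\overline{\nabla_t F}}_2^2}$ dominates $\max_i\norm{\overline{\nabla^i_t F}-\overline{\nabla_t F}}_2$. The first ingredient is a gradient-tracking identity, $\tfrac1N\sum_{i=1}^N\nabla^i_t F=\overline{\nabla_t F}$ for all $t$, which I would prove by induction on $t$. Indeed, since $W^t$ is column-stochastic, the aggregation step preserves the average of the auxiliary gradients, so $\tfrac1N\sum_i\overline{\nabla^i_{t-1}F}=\tfrac1N\sum_i\nabla^i_{t-1}F$; combining this with the telescoping definition $\nabla^i_t F=\overline{\nabla^i_{t-1}F}+\nabla f_i(\overline x^i_t)-\nabla f_i(\overline x^i_{t-1})$ and the fact that $\tfrac1N\sum_i[\nabla f_i(\overline x^i_t)-\nabla f_i(\overline x^i_{t-1})]=\overline{\nabla_t F}-\overline{\nabla_{t-1}F}$ keeps the average of the tracked gradients pinned to the average of the true local gradients. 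This is exactly what makes $\overline{\nabla_t F}$ the correct center from which to measure deviation.

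Next I would apply the contraction property of Assumption~\ref{assum:mixing_matrix_sequence} to the aggregation step. Because $\overline{\nabla_t F}$ is the row-average of the matrix whose rows are $(\nabla^j_t F)^\top$, the matrix $W^t-\tfrac1N\one\one^\top$ sends that matrix to the one with rows $(\overline{\nabla^i_t F}-\overline{\nabla_t F})^\top$, so applying the contraction coordinate-wise gives $D_t\le\lambda\sqrt{\sum_j\norm{\nabla^j_t F-\overline{\nabla_t F}}_2^2}$ with $\lambda=\max_t\lambda_2(W^t)$. I would then decompose $\nabla^j_t F-\overline{\nabla_t F}=\cbraces{\overline{\nabla^j_{t-1}F}-\overline{\nabla_{t-1}F}}+g^j_t$, where $h^j_t:=\nabla f_j(\overline x^j_t)-\nabla f_j(\overline x^j_{t-1})$ and $g^j_t:=h^j_t-\tfrac1N\sum_i h^i_t$, and use the triangle inequality in the stacked $\ell_2$ norm to obtain $\sqrt{\sum_j\norm{\nabla^j_t F-\overline{\nabla_t F}}_2^2}\le D_{t-1}+\sqrt{\sum_j\norm{g^j_t}_2^2}$. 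Since centering only decreases the norm, $\sum_j\norm{g^j_t}_2^2\le\sum_j\norm{h^j_t}_2^2$.

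The third step is to bound the source term $\sqrt{\sum_j\norm{h^j_t}_2^2}$. By $L$-smoothness (Assumption~\ref{assum:convexity_smoothness}), $\norm{h^j_t}_2\le L\norm{\overline x^j_t-\overline x^j_{t-1}}_2$, and I would control the consensus increment through $\norm{\overline x^j_t-\overline x^j_{t-1}}_2\le\norm{\overline x^j_t-\overline x_t}_2+\norm{\overline x_t-\overline x_{t-1}}_2+\norm{\overline x_{t-1}-\overline x^j_{t-1}}_2$. The outer two terms are at most $\Delta p_t$ and $\Delta p_{t-1}$ by Lemma~\ref{lem:estimate_cp}; the middle term I would bound by first showing that the global average satisfies $\overline x_t=(1-\gamma_{t-1})\overline x_{t-1}+\gamma_{t-1}\tilde v_{t-1}$ (again using column-stochasticity to transport the mean through the consensus step), whence $\norm{\overline x_t-\overline x_{t-1}}_2\le\gamma_{t-1}\overline\rho$. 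With $\gamma_{t-1}=(t-1)^{-\alpha}$ and $\Delta p_t=C_p/t^\alpha$, this collapses into a bound of order $(2C_p+\overline\rho)/t^\alpha$ on $\max_j\norm{\overline x^j_t-\overline x^j_{t-1}}_2$, giving $\sqrt{\sum_j\norm{h^j_t}_2^2}\le\sqrt N\,L(2C_p+\overline\rho)\,O(t^{-\alpha})$. The resulting recursion $D_t\le\lambda\cbraces{D_{t-1}+\sqrt N\,L(2C_p+\overline\rho)/t^\alpha}$ I would then close exactly as in Lemma~\ref{lem:estimate_cp}: the diameter bound supplies the base case for $t\le t_0$, and for $t\ge t_0$ the defining inequality \eqref{eq:def_t0} guarantees $\lambda\,\tfrac{(t_0)^\alpha+1}{(t_0)^\alpha t^\alpha}\le(t+1)^{-\alpha}$, propagating $D_t\le C_g/t^\alpha$ with $C_g=2\sqrt N\,(t_0)^\alpha(2C_p+\overline\rho)L$.

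I expect the main obstacle to lie in the third step: cleanly bounding the per-step drift $\norm{\overline x^j_t-\overline x^j_{t-1}}_2$ and reconciling the mismatch between the $t^{-\alpha}$ and $(t-1)^{-\alpha}$ decay rates so that everything condenses into a single $C_g/t^\alpha$ with precisely the stated constant, accounting for the factor $2$ and the grouping $2C_p+\overline\rho$. The gradient-tracking identity and the contraction step are the conceptual core but are essentially routine; it is the constant bookkeeping needed to reuse the threshold $t_0(\alpha)$ from Lemma~\ref{lem:estimate_cp} that demands the most care.
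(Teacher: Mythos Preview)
Your proposal is correct and follows essentially the same inductive/contraction scheme as the paper. The only cosmetic differences are that the paper bounds the drift $\norm{\overline x^i_{t+1}-\overline x^i_t}_2$ directly via the mixing representation $\overline x^i_{t+1}=\sum_j W^t_{ij}x^j_{t+1}$ (writing $x^j_{t+1}-\overline x^j_t = t^{-\alpha}(v^j_t-\overline x^j_t)$ and $\norm{\overline x^j_t-\overline x^i_t}_2\le 2C_p t^{-\alpha}$) rather than detouring through the global means, and it controls $\norm{\delta f^i_{t+1}-\delta F_{t+1}}_2$ by a crude triangle inequality (producing the factor~$2$ in $C_g$) where you use the centering inequality; both routes land on the same recursion and close via~\eqref{eq:def_t0}.
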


\begin{proof}

    Заметим, что от $t = 1$ до $t = t_0$ неравенство выполняется, что следует из ограниченности градиентов. Для шага индукции предположим, что неравенство выполняется для $t \geq t_0$.
    
    Определим вспомогательную переменную $\delta f^i_{t+1} = \nabla f_i(\overline{x}^i_{t+1} - \nabla f_i (\overline{x}^i_t)$.
    Тогда перепишем $\nabla^i_{t+1} F = \delta f^i_{t+1} + \overline{\nabla^i_t F}$, а так же $\overline{\nabla^i_{t+1} F} = \sum\limits^N_{j= 1 } W_{ij} \nabla ^j_{t+1} F$, получаем, оценив $\lambda_2(W^t)$ аналогично прошлому доказательству:
    \begin{align*} 
    \sum_{i=1}^{N} &\| \overline{\nabla_{t+1}^{i} F} - \overline{\nabla_{t+1} F} \|_{2}^{2} \\
    &\leq (\lambda_{2}( W^t))^{2} \cdot \sum_{i=1}^{N} \| \overline{\nabla_{t}^{i} F} + \delta f_{t+1}^{i} - \overline{\nabla_{t+1} F} \|_{2}^{2} \\
    &\leq \lambda \cdot \sum_{i=1}^{N} \| \overline{\nabla_{t}^{i} F} + \delta f_{t+1}^{i} - \overline{\nabla_{t+1} F} \|_{2}^{2}.
    \end{align*}
    Аналогично, определим $\delta F_{t+1} = \overline{\nabla_{t+1} F} - \overline{\nabla_t} F$, тогда правую часть (3.12) с помощью неравенства Коши-Буняковского-Шварца можно ограничить как

    \begin{align*}
        \sum_{i=1}^{N} &\| \overline{\nabla_{t}^{i} F} + \delta f_{t+1}^{i} - \overline{\nabla_{t+1} F} \|_{2}^{2} \\
        &\leq \sum_{i=1}^{N} \big(\| \overline{\nabla_{t}^{i} F} - \overline{\nabla_{t} F} \|_{2}^{2} + \| \delta f_{t+1}^{i} - \delta F_{t+1} \|_{2}^{2} \\
        &\quad+ 2 \cdot \| \delta f_{t+1}^{i} - \delta F_{t+1} \|_{2} \cdot \| \overline{\nabla_{t}^{i} F} - \overline{\nabla_{t} F} \|_{2}\big).
	\end{align*}
    Кроме того, справедливо
    \begin{align*}
        \| \delta f_{t+1}^{i} \|_{2} &= \|\nabla f_{i}(\overline{ x}_{t+1}^{i}) - \nabla f_{i}(\overline{ x}_{t}^{i}) \|_{2} \leq L \| \overline{ x}_{t+1}^{i} - \overline{ x}_{t}^{i} \|_{2}  \\
        &\leq L \Big\| \sum_{j=1}^{N} W_{ij} \big( ( x_{t+1}^{j} - \overline{ x}_{t}^{j}) + (\overline{ x}_{t}^{j} - \overline{ x}_{t}^{i}) \big) \Big\|_{2} \\
        &\leq L \sum_{j=1}^{N} W_{ij} \Big( t^{-\alpha} \rho + 2C_{p} t^{-\alpha} \Big) \\
        &= (2C_{p} + \rho) Lt^{-\alpha},
\end{align*}
где последнее неравенство записано с помощью результата Леммы 5.

Используя неравенство треугольника, оценим еще одно слагаемое из (3.14):
\begin{align*}
        \| \delta &f_{t+1}^{i} - \delta F_{t+1} \|_{2} \\
        &= \Big\| \big(1-\frac{1}{N}\big) \delta_{t+1}^{i} + \frac{1}{N} \sum_{j \neq i} \delta_{t+1}^{j} \Big\|_{2} \\
        &\leq \bigg( 1-\frac{1}{N} \bigg) \| \delta_{t+1}^{i} \|_{2} + \frac{1}{N} \sum_{j \neq i} \| \delta_{t+1}^{j} \|_{2} \\
        &\leq 2 \bigg( 1-\frac{1}{N} \bigg) (2C_{p} + \overline{\rho}) Lt^{-\alpha} \leq 2(2C_{p} + \overline{\rho}) Lt^{-\alpha}.
\end{align*}

Итого, получаем окончательную оценку (3.14):
\begin{align*}
        \sum_{i=1}^{N} &\| \overline{\nabla_{t}^{i} F} + \delta f_{t+1}^{i} - \overline{\nabla_{t+1} F} \|_{2}^{2} \\
        &\leq t^{-2\alpha} (C_{g}^{2} + 4N(2C_{p} + \rho)^{2} L^{2}) + \\
        &~~+ t^{-\alpha} 4L(2C_{p} + \rho) \sqrt{N} \sqrt{\sum\nolimits_{i=1}^{N} \| \overline{\nabla_{t}^{i} F} - \overline{\nabla_{t} F} \|_{2}^{2}} \\
        &\leq t^{-2\alpha} \cdot \big( C_{g} + 2L \sqrt{N} (2C_{p} + \rho) \big)^{2} \\
        &\leq \cbraces{\frac{(t_{0})^{\alpha} + 1}{(t_{0})^{\alpha} \cdot t^{\alpha}} \cdot C_{g}}^{2}.
\end{align*}

Взяв корень из обеих частей неравенства, получаем:

\begin{equation}
    \sqrt{\sum_{i=1}^{N} \| \overline{\nabla_{t+1}^{i} F} - \overline{\nabla_{t+1} F} \|_{2}^{2} } \leq \lambda \Big( \frac{(t_{0})^{\alpha} + 1}{(t_{0})^{\alpha} \cdot t^{\alpha}} \cdot C_{g} \Big).
\end{equation}
И, с учетом (3.8), окончательно завершаем шаг индукции, откуда следует (3.10)
\end{proof}
Таким образом, \rev{теорема~\ref{th:decentralized_frank_wolfe}} дает нам верхнюю оценку скорости сходимости алгоритма, а \rev{лемма~\ref{lem:estimate_cp}} и \rev{лемма~\ref{lem:estimate_cg}} гарантируют выполнение нужных предположений. \rev{Осталось заметить, что для выполнения условия~\eqref{eq:def_t0} достаточно взять
\begin{align*}
	t_0 = \left\lceil\frac{2}{1 - \lambda}\right\rceil.
\end{align*}
Тогда, согласно определениям \ref{eq:def_cp} и \ref{eq:def_cg}, получим
\begin{align*}
	LC_p + C_g = O(N L\overline\rho^2\chi^2).
\end{align*}
Получаем окончательную скорость сходимости.
\begin{corollary}
	Для достижения точности $\varepsilon$, т.е. для выполнения условия
	\begin{subequations}\label{eq:def_accuracy}
		\begin{align}
			\frac{1}{N} \sum_{i=1}^N f_i(x_N^i) - f(x^*)\leq \varepsilon, \\
			\max_{i\in[N]} \E\norm{\overline x_N^i - \overline x_N}_2\leq \varepsilon,
		\end{align}
	\end{subequations}
	необходимо $$N = O\cbraces{\frac{1}{(1 - \lambda)^2}\frac{L\overline\rho^2}{\varepsilon}}$$ итераций алгоритма \ref{alg:decentralized_fw}.
\end{corollary}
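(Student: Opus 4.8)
The plan is to invert the convergence bound of Theorem~\ref{th:decentralized_frank_wolfe} after substituting the explicit constants supplied by Lemmas~\ref{lem:estimate_cp} and~\ref{lem:estimate_cg}. Throughout I fix $\alpha = 1$ (the only case we need), write $\chi = 1/(1-\lambda)$, and treat the factors depending on the number of nodes as absorbed into the $O(\cdot)$ constants, consistently with the statement. Note that in the corollary $N$ denotes the iteration count, so I use $t$ for the running index and reserve $N$ for the final total.

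First I would verify that the choice $t_0 = \lceil 2/(1-\lambda)\rceil = O(\chi)$ indeed satisfies the defining inequality~\eqref{eq:def_t0}, so that the two lemmas apply with this value. Feeding $t_0 = O(\chi)$ into~\eqref{eq:def_cp} gives $C_p = O(\chi\overline\rho)$, and feeding both $t_0$ and $C_p$ into~\eqref{eq:def_cg} gives $C_g = O(L\overline\rho\chi^2)$; the $\chi^2$ appears because $t_0$ enters $C_g$ once explicitly and once again through $C_p$. Hence the numerator of the Theorem~\ref{th:decentralized_frank_wolfe} bound collapses to
\begin{equation*}
	8\overline\rho(C_g + LC_p) + 2L\overline\rho^2 = O\cbraces{L\overline\rho^2\chi^2},
\end{equation*}
so that $F(\overline x_t) - F(\overline x^*) = O\cbraces{L\overline\rho^2\chi^2/(t+1)}$.

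Next I would impose the first accuracy requirement. Demanding $O\cbraces{L\overline\rho^2\chi^2/(t+1)}\leq\varepsilon$ and solving for the iteration index yields
\begin{equation*}
	t + 1 = O\cbraces{\frac{\chi^2 L\overline\rho^2}{\varepsilon}} = O\cbraces{\frac{1}{(1-\lambda)^2}\frac{L\overline\rho^2}{\varepsilon}},
\end{equation*}
which is precisely the claimed iteration count $N$.

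Finally — and this is the step I expect to be the main obstacle — I would check that the second requirement, the consensus bound $\max_{i}\E\norm{\overline x_N^i - \overline x_N}_2\leq\varepsilon$, is met by the same $N$ rather than forcing a larger one. By Lemma~\ref{lem:estimate_cp} the consensus error is deterministic, $\Delta p_N = C_p/N = O(\chi\overline\rho/N)$, so the expectation is trivial and it suffices to observe that, for the $N$ found above, $\Delta p_N = O\cbraces{\varepsilon/(\chi L\overline\rho)}$, which is $\leq\varepsilon$ whenever $\chi L\overline\rho\gtrsim 1$ — the regime of interest. Thus the function-suboptimality condition is the binding one and the consensus condition comes for free, so the stated complexity governs both parts of~\eqref{eq:def_accuracy} simultaneously.
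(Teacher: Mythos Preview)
Your proposal is correct and follows essentially the same route as the paper: choose $t_0=\lceil 2/(1-\lambda)\rceil$, substitute the resulting $C_p,C_g$ from Lemmas~\ref{lem:estimate_cp}--\ref{lem:estimate_cg} into the numerator of Theorem~\ref{th:decentralized_frank_wolfe}, and invert. You are in fact slightly more thorough than the paper, which states the corollary immediately after computing $LC_p+C_g=O(NL\overline\rho^2\chi^2)$ without explicitly checking that the consensus requirement in~\eqref{eq:def_accuracy} is dominated by the function-value one; your final paragraph supplies that missing verification.
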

Также заметим, что можно применить консенсусную процедуру, в которой коммуникационная матрица $W^t$ заменится на последовательность матриц $W^{t + \tau - 1}\ldots W^t$, где $\tau = \lceil\chi\rceil$. Это позволит получить следующий результат.
\begin{corollary}
	Для достижения точности $\varepsilon$ (см.~\eqref{eq:def_accuracy})  с использованием консенсусной процедуры необходимо 
	\begin{align*}
		N_{comm} = O\cbraces{\frac{1}{1 - \lambda} \frac{L\overline\rho^2}{\varepsilon}}
	\end{align*}
	коммуникационных шагов и
	\begin{align*}
		N_{orcl} = O\cbraces{\frac{L\overline\rho^2}{\varepsilon}}
	\end{align*}
	локальных вызовов линейного минимизационного оракула на каждом узле.
\end{corollary}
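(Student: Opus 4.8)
The plan is to reduce the corollary to the already-proved Theorem~\ref{th:decentralized_frank_wolfe} together with Lemmas~\ref{lem:estimate_cp} and~\ref{lem:estimate_cg}, by showing that the consensus procedure turns the connectivity-dependent contraction factor $\lambda$ into a universal constant bounded away from $1$. Concretely, running $\tau=\lceil\chi\rceil$ communication rounds per outer iteration replaces the single mixing matrix $W^t$ by the product $W^{t+\tau-1}\cdots W^t$, so the first task is to certify that this product again satisfies Assumption~\ref{assum:mixing_matrix_sequence}, now with contraction factor $\lambda^\tau$ in place of $\lambda$.

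First I would prove the composition identity. Writing $P=\frac1N\one\one^\top$, double stochasticity gives $W^sP=PW^s=P$ and $P^2=P$, so a telescoping computation yields
\begin{equation*}
	W^{t+\tau-1}\cdots W^t-P=\bigl(W^{t+\tau-1}-P\bigr)\cdots\bigl(W^t-P\bigr).
\end{equation*}
By submultiplicativity of the operator norm and the contraction property of each factor, the composite deviation from $P$ has norm at most $\lambda^\tau$; double stochasticity of the product is immediate, and graph consistency is respected because the product is realized by $\tau$ genuine one-hop communication rounds rather than by a single matrix. This composition step is short but is the main conceptual point of the proof.

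Next I would bound the effective contraction. Since $\tau=\lceil\chi\rceil\geq 1/(1-\lambda)$ and $\ln\lambda\leq\lambda-1$ on $(0,1)$, one gets $\lambda^{1/(1-\lambda)}\leq e^{-1}$, hence the effective factor $\tilde\lambda=\lambda^\tau\leq e^{-1}<1$ is independent of $\chi$. Substituting $\tilde\lambda$ for $\lambda$ in~\eqref{eq:def_t0} makes the effective threshold $t_0=O(1)$, so Lemma~\ref{lem:estimate_cp} gives $C_p=O(\sqrt N\,\overline\rho)$ and Lemma~\ref{lem:estimate_cg} gives $C_g$ with no $\chi$ factor; in particular $C_g+LC_p$ loses the $\chi^2$ dependence that it carried in the previous corollary.

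Finally I would count the two cost measures. Each outer iteration of Algorithm~\ref{alg:decentralized_fw} issues exactly one linear-minimization-oracle call per node, so the iteration count equals $N_{orcl}$; feeding the $\chi$-free bounds on $C_p,C_g$ into the $O(1/t)$ rate of Theorem~\ref{th:decentralized_frank_wolfe} and solving for accuracy $\varepsilon$ gives $N_{orcl}=O\cbraces{\frac{L\overline\rho^2}{\varepsilon}}$. Because each consensus and aggregation step now costs $\tau=\lceil\chi\rceil=O\cbraces{\frac{1}{1-\lambda}}$ communication rounds, the total communication complexity is $N_{comm}=\tau\cdot N_{orcl}=O\cbraces{\frac{1}{1-\lambda}\frac{L\overline\rho^2}{\varepsilon}}$, as claimed. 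The one remaining point to check carefully is that using a fixed inner length $\tau$ at every outer step is precisely what drives $\Delta p_t$ and $\Delta d_t$ to the $\chi$-independent decay rate demanded by Assumptions~\ref{assum:bounded_arg} and~\ref{assum:bounded_grad}.
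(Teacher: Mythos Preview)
Your argument is correct and is exactly the route the paper has in mind: the paper provides no proof beyond the single sentence that the consensus procedure replaces $W^t$ by the product $W^{t+\tau-1}\cdots W^t$ with $\tau=\lceil\chi\rceil$, and you have simply filled in the missing details (the telescoping identity for the product minus $P$, the bound $\lambda^\tau\leq e^{-1}$, and the re-application of Theorem~\ref{th:decentralized_frank_wolfe} with the now $\chi$-free $t_0,C_p,C_g$). One minor remark: just as in the paper's own Corollary~1, the node-count factor $N$ coming from $C_p,C_g$ is silently absorbed into the big-$O$ constant, so your bookkeeping is consistent with theirs.
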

}

\subsection{Случайная коммуникационная матрица}

Можно провести аналогичные рассуждения, но в случае, когда матрица коммуникации имеет случайное распределение. Доказательство будет строиться на оценке отклонения от консенсуса $\overline{x_t}$ и $\overline{\nabla_t F}$. Так как матрица является стохастической, то во все невязки будут оцениваться по матожиданию. В данном разделе мы не приводим доказательства, так как они во многом повторяют часть с неслучайной матрицей. Введем соответствующие предположения.
\begin{assumption}\label{assum:mixing_matrix_sequence_expectation}
	На каждом шаге алгоритма матрица $W^t$ является случайной и имеет распределение $\mathcal{W}$. Существует такое $\lambda < 1$, что для любого $t = 0, 1, \ldots$ выполняется
	\begin{enumerate}
		\item $W^t\mathbf{1} = \mathbf{1},~ \mathbf{1}^\top W^t = \mathbf{1}^\top$.
		\item $[W^t]_{ij} = 0$ если $(i, j)\notin E^t$.
		\item Для любого $x\in\R^n$ выполняется 
		\begin{align*}
			\E\norm{\cbraces{W^t - \frac{1}{N}\mathbf{1}\mathbf{1}^\top}x}\leq \lambda\norm{x}.
		\end{align*}
	\end{enumerate}
\end{assumption}

\begin{assumption}\label{assum:bounded_arg_expectation}
    Пусть существует $(\{ \Delta p_t\}_{t\geq 1})$, \ $\forall t \geq 1$ неотрицательная последовательность такая, что $\Delta p_t \longrightarrow 0$, тогда
\begin{equation}
    \max_{i \in [N]} \E\| \overline{x}_t^{i} - \overline{x}_t \|_2 \leq \Delta p_t
\end{equation}
\end{assumption}

\begin{assumption}\label{assum:bounded_grad_expectation}
    Пусть существует $\{ \Delta d_t\}_{t\geq 1}$, \ $\forall t \geq 1$ неотрицательная последовательность такая, что $\Delta d_t \longrightarrow 0$, тогда
\begin{equation}
\max_{i \in [N]} \E\| \overline{\nabla_t^i F} - \overline{\nabla_t F} \|_2 \leq \Delta d_t
\end{equation}
\end{assumption}

Сформулируем основной результат для случайной матрицы коммуникаций.
\begin{theorem}
Пусть выполняются предположения \ref{assum:convexity_smoothness}, \ref{assum:mixing_matrix_sequence_expectation}, \ref{assum:bounded_arg_expectation}, \ref{assum:bounded_grad_expectation} размер шага равен $\gamma_t = 2/(t+1)$, а также каждая из функций $f_i$ выпуклая и $L$-гладкая. Пусть $C_p, \ C_g$ - положительные константы, такие что $\Delta p_t = C_p / t, \Delta d_t = C_g/t $. Тогда
\begin{equation}
    \E F(\overline{x}_t) - F(\overline{x}^*) \leq \cfrac{8\overline{\rho} (C_g + L C_p) + 2L\overline{\rho}^2}{t+1}
\end{equation}

для любых $t \geq 1$, где $\overline{x}^*$ - оптимальное решение задачи.
\end{theorem}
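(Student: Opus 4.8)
The plan is to reproduce the proof of Theorem~\ref{th:decentralized_frank_wolfe} line for line, inserting an expectation at each step. The crucial observation is that Assumptions~\ref{assum:bounded_arg_expectation} and~\ref{assum:bounded_grad_expectation} bound $\E\|\cdot\|$ by exactly the same sequences $\Delta p_t = C_p/t$ and $\Delta d_t = C_g/t$ that the deterministic assumptions bounded $\|\cdot\|$ by, so the scalar recursion driving the whole argument is unchanged once expectations are taken. First I would establish the recursion for the averaged iterate: double stochasticity of $W^t$ gives $\tfrac{1}{N}\sum_i \overline{x}_t^i = \overline{x}_t$, so averaging the update $x_{t+1}^i = \overline{x}_t^i + \gamma_t(v_t^i - \overline{x}_t^i)$ yields $\overline{x}_{t+1} = \overline{x}_t + \gamma_t(\overline{v}_t - \overline{x}_t)$ with $\overline{v}_t = \tfrac{1}{N}\sum_i v_t^i$. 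This identity is pathwise, so the randomness of $W^t$ does not interfere with it.

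Next I would apply $L$-smoothness of $F(\cdot)=\tfrac{1}{N}\sum_i f_i(\cdot)$ to this recursion to obtain the pathwise descent inequality
\[ F(\overline{x}_{t+1}) \leq F(\overline{x}_t) + \gamma_t\langle \nabla F(\overline{x}_t), \overline{v}_t - \overline{x}_t\rangle + \tfrac{L}{2}\gamma_t^2\overline{\rho}^2, \]
using $\|\overline{v}_t - \overline{x}_t\| \leq \overline{\rho}$. The core of the argument is bounding the linear term. Since each $v_t^i$ minimizes $\langle \overline{\nabla_t^i F}, \cdot\rangle$ over $D$, testing against $\overline{x}^*$ gives $\langle \overline{\nabla_t^i F}, v_t^i - \overline{x}_t\rangle \leq \langle \overline{\nabla_t^i F}, \overline{x}^* - \overline{x}_t\rangle$; adding and subtracting $\nabla F(\overline{x}_t)$ twice, invoking convexity in the form $\langle \nabla F(\overline{x}_t), \overline{x}^* - \overline{x}_t\rangle \leq F(\overline{x}^*) - F(\overline{x}_t)$, and applying Cauchy--Schwarz to the two leftover inner products yields
\[ \langle \nabla F(\overline{x}_t), \overline{v}_t - \overline{x}_t\rangle \leq F(\overline{x}^*) - F(\overline{x}_t) + \frac{2\overline{\rho}}{N}\sum_{i=1}^N\bigl\|\nabla F(\overline{x}_t) - \overline{\nabla_t^i F}\bigr\|_2. \]
By the triangle inequality and $L$-smoothness the gradient discrepancy splits as $\|\nabla F(\overline{x}_t) - \overline{\nabla_t^i F}\|_2 \leq L\|\overline{x}_t - \overline{x}_t^i\|_2 + \|\overline{\nabla_t F} - \overline{\nabla_t^i F}\|_2$.

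Now I take expectations, and this is where Assumptions~\ref{assum:bounded_arg_expectation} and~\ref{assum:bounded_grad_expectation} enter: $\E\|\overline{x}_t - \overline{x}_t^i\|_2\leq C_p/t$ and $\E\|\overline{\nabla_t F} - \overline{\nabla_t^i F}\|_2\leq C_g/t$, so the expected error term is at most $2\overline{\rho}(LC_p + C_g)/t$. Writing $h_t = \E F(\overline{x}_t) - F(\overline{x}^*)$ and inserting $\gamma_t = 2/(t+1)$ produces
\[ h_{t+1} \leq (1-\gamma_t)h_t + \frac{4\overline{\rho}(LC_p+C_g)}{t(t+1)} + \frac{2L\overline{\rho}^2}{(t+1)^2}. \]
I close the proof by induction: assuming $h_t \leq K/(t+1)$ with $K = 8\overline{\rho}(C_g+LC_p)+2L\overline{\rho}^2$, one uses $\tfrac{1}{t(t+1)}\leq\tfrac{2}{(t+1)^2}$ (valid for $t\geq 1$, and exactly the step that forces the factor $8$ in $K$) to bound the additive term by $K/(t+1)^2$, after which $(1-\gamma_t)\tfrac{K}{t+1}+\tfrac{K}{(t+1)^2} = \tfrac{tK}{(t+1)^2}\leq \tfrac{K}{t+2}$ since $t(t+2)\leq(t+1)^2$.

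The main obstacle is bookkeeping rather than analysis: one must check that expectation passes cleanly through every step. The decisive point is that the descent inequality holds pathwise and every error contribution is collapsed by Cauchy--Schwarz to a single norm \emph{before} any expectation is taken, so only expected norms appear --- precisely the quantities that Assumptions~\ref{assum:bounded_arg_expectation} and~\ref{assum:bounded_grad_expectation} control. In particular no independence of the matrices $W^t$ and no factorization of an expectation of a product is ever needed, which is exactly why the random case genuinely repeats the deterministic one.
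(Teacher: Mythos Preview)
Your proposal is correct and is exactly the approach the paper indicates: the paper does not give a proof of this theorem but simply states that the argument repeats the deterministic case (Theorem~\ref{th:decentralized_frank_wolfe}, itself taken from~\cite{base}) with expectations inserted, which is precisely what you carry out. Your observation that every error term is collapsed to a single norm \emph{before} taking expectations is the key reason the stochastic proof genuinely reduces to the deterministic one, and the paper offers no detail beyond that.
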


Таким образом, при выполнении предположений получаем линейную оценку скорости сходимости алгоритма.

Сформулируем леммы, которые гарантируют выполнение вышеизложенных предположений. Доказательство лемм аналогично доказательствам в разделе \ref{subsec:deterministic_changing_graph}.
\begin{lemma}
Пусть $t_0$ -- наименьшее положительное целое число такое, что 
\begin{equation}
    \lambda \leq \left( \cfrac{t_0(\alpha)}{t_0(\alpha)+1} \right)^{\alpha} \cdot \cfrac{1}{1 + (t_0(\alpha))^{-\alpha}}.
\end{equation}
Зададим шаг $\gamma_t = 1/t^{\alpha}$ в алгоритме Франк-Вульфа для $\alpha \in (0,1]$, тогда выполняется:
\begin{gather}
    \max_{i \in V} \E||\overline{x}^i_t - \overline{x}_t||_2 \leq \Delta p_t = C_p/t^{\alpha}, \ \forall t \geq 1 \\
    C_p = (t_0(\alpha))^{\alpha} \cdot \sqrt{N} \overline{\rho}
\end{gather}

\end{lemma}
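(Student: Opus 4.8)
The plan is to reproduce the deterministic argument of Lemma~\ref{lem:estimate_cp} almost verbatim in its algebraic core, the only genuinely new ingredient being that the contraction now holds only in expectation, so I carry a \emph{conditional} expectation through the induction. Introduce the aggregate deviation $\Phi_t \defi \sqrt{\sum_{i=1}^N \norm{\overline{x}_t^i - \overline{x}_t}_2^2}$; since $\max_i \E\norm{\overline{x}_t^i - \overline{x}_t}_2 \leq \E\Phi_t$, it suffices to prove $\E\Phi_t \leq C_p/t^\alpha$ for all $t\geq 1$. The base case $1\leq t\leq t_0$ is immediate: all iterates lie in $D$, whose diameter is $\overline{\rho}$, so $\Phi_t\leq\sqrt{N}\,\overline{\rho}$ pointwise, and $\sqrt{N}\,\overline{\rho}\leq (t_0/t)^\alpha\sqrt{N}\,\overline{\rho}=C_p/t^\alpha$ because $t\leq t_0$.

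For the induction step, fix $t\geq t_0$, assume $\E\Phi_t\leq C_p/t^\alpha$, and let $\mathcal{F}_t$ denote the history through the construction of $x_{t+1}^i=(1-\gamma_t)\overline{x}_t^i+\gamma_t v_t^i$. Writing $Z_{t+1}$ for the matrix with rows $(x_{t+1}^j-\overline{x}_{t+1})^\top$ and $P\defi\frac1N\one\one^\top$, double stochasticity of $W^{t+1}$ gives $\overline{x}_{t+1}^i-\overline{x}_{t+1}=\big[(W^{t+1}-P)Z_{t+1}\big]_i$, so $\Phi_{t+1}=\norm{(W^{t+1}-P)Z_{t+1}}_F$, and $Z_{t+1}$ is $\mathcal{F}_t$-measurable. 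Under the i.i.d.\ reading of Assumption~\ref{assum:mixing_matrix_sequence_expectation}, $W^{t+1}$ is drawn independently of $\mathcal{F}_t$, so the contraction property yields, after conditioning,
\begin{equation*}
	\E\sbraces{\Phi_{t+1}\mid\mathcal{F}_t}\leq\lambda\norm{Z_{t+1}}_F.
\end{equation*}

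It remains to bound the $\mathcal{F}_t$-measurable quantity $\norm{Z_{t+1}}_F$ by the same deterministic expansion as in Lemma~\ref{lem:estimate_cp}. Using $x_{t+1}^j-\overline{x}_{t+1}=(1-\gamma_t)(\overline{x}_t^j-\overline{x}_t)+\gamma_t(v_t^j-\tilde v_t)$ (with $\tilde v_t$ the average of the $v_t^j$), the diameter bound $\norm{v_t^j-\tilde v_t}_2\leq\overline{\rho}$, Cauchy--Schwarz, and $\sum_j\norm{\overline{x}_t^j-\overline{x}_t}_2\leq\sqrt N\,\Phi_t$, I expect the clean linear estimate $\norm{Z_{t+1}}_F\leq\Phi_t+\sqrt N\,\overline{\rho}\,t^{-\alpha}$ (the square completes to $(\Phi_t+\sqrt N\,\overline{\rho}\,t^{-\alpha})^2$). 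Substituting, taking total expectation, and invoking the induction hypothesis gives $\E\Phi_{t+1}\leq\lambda\,t^{-\alpha}(C_p+\sqrt N\,\overline{\rho})=\lambda C_p\frac{(t_0)^\alpha+1}{(t_0)^\alpha t^\alpha}$, where the last equality uses $C_p=(t_0)^\alpha\sqrt N\,\overline{\rho}$. Finally, since $t\mapsto t/(t+1)$ is increasing and $t\geq t_0$, the defining inequality for $t_0$ gives $\lambda\frac{(t_0)^\alpha+1}{(t_0)^\alpha}\leq\cbraces{\frac{t_0}{t_0+1}}^\alpha\leq\cbraces{\frac{t}{t+1}}^\alpha$, whence $\E\Phi_{t+1}\leq C_p/(t+1)^\alpha$, closing the induction. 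Note that the linearity of the bound in $\Phi_t$ is exactly what lets the induction hypothesis on $\E\Phi_t$ pass through the expectation unmodified.

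The one step I expect to require care, relative to the deterministic lemma, is the passage from the vector contraction to the Frobenius-norm contraction $\E\norm{(W^{t+1}-P)Z_{t+1}}_F\leq\lambda\norm{Z_{t+1}}_F$ used above. Because Assumption~\ref{assum:mixing_matrix_sequence_expectation} controls only $\E\norm{(W-P)x}$ (the expectation of the norm, not of its square) for a fixed vector $x$, applying it column-by-column and recombining by Jensen does not by itself reproduce the Frobenius bound; the argument goes through verbatim for scalar-valued states, and in the general case I would either invoke the natural strengthening $\E\norm{(W-P)x}_2^2\leq\lambda^2\norm{x}_2^2$ or treat the centered state as a single stacked vector on which the conditional contraction acts. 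Everything else is the deterministic computation of Lemma~\ref{lem:estimate_cp} with an expectation appended.
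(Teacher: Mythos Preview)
Your proposal is correct and follows exactly the route the paper intends: the paper does not give a separate proof for this stochastic lemma at all, stating only that it is ``аналогично'' to the deterministic Lemma~\ref{lem:estimate_cp}, and your argument is precisely that deterministic computation with a conditional expectation threaded through the induction. In fact you are more careful than the paper: your observation that Assumption~\ref{assum:mixing_matrix_sequence_expectation} controls only $\E\norm{(W-P)x}_2$ for fixed vectors, so that the passage to the Frobenius-norm contraction $\E\norm{(W-P)Z}_F\leq\lambda\norm{Z}_F$ requires either the second-moment strengthening $\E\norm{(W-P)x}_2^2\leq\lambda^2\norm{x}_2^2$ or a stacked-vector reading of the assumption, is a genuine subtlety that the paper's ``analogous'' simply passes over.
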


\begin{lemma}
    Зададим шаг $\gamma_t = 1/t^{\alpha}$ в алгоритме Франк-Вульфа для $\alpha \in (0,1]$, каждая из функций $f_i$ $L-$гладкая, тогда выполняется:
\begin{gather}
     \max_{i\in V} \E\| \overline{\nabla_{t}^{i} F} - \overline{\nabla_{t} F} \|_{2} \leq \frac{C_{g}}{t^{\alpha}}, \\ C_{g} = 2\sqrt{N}(t_{0})^{\alpha} (2C_{p} + \overline{\rho})L
\end{gather}
\end{lemma}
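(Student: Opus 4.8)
The plan is to mirror the proof of Lemma~\ref{lem:estimate_cg}, replacing the deterministic spectral contraction $\lambda_2(W^t)\leq\lambda$ by the expectation contraction of Assumption~\ref{assum:mixing_matrix_sequence_expectation} and carrying every norm estimate under the expectation sign. Introduce the scalar consensus-error process $g_t \defi \sqrt{\sum_{i=1}^N \norm{\overline{\nabla_t^i F} - \overline{\nabla_t F}}_2^2}$ and prove by induction on $t$ that $\E g_t \leq C_g/t^\alpha$; since $\max_{i} \E\norm{\overline{\nabla_t^i F} - \overline{\nabla_t F}}_2 \leq \E\max_i \norm{\overline{\nabla_t^i F} - \overline{\nabla_t F}}_2 \leq \E g_t$, this implies the claimed bound. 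For $1\leq t\leq t_0$ the estimate follows from boundedness of the aggregated gradients exactly as in the deterministic case; the work is in the inductive step $t\geq t_0$.

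For the inductive step, condition on the $\sigma$-algebra $\mathcal F_t$ generated by $W^0,\dots,W^t$. As in Lemma~\ref{lem:estimate_cg}, use the gradient-tracking identity $\tfrac{1}{N}\sum_i \nabla_t^i F = \overline{\nabla_t F}$ (preserved because each $W^t$ is doubly stochastic) together with the decomposition $\nabla_{t+1}^i F = \overline{\nabla_t^i F} + \delta f_{t+1}^i$, where $\delta f_{t+1}^i = \nabla f_i(\overline x_{t+1}^i) - \nabla f_i(\overline x_t^i)$. Applying item~3 of Assumption~\ref{assum:mixing_matrix_sequence_expectation} coordinate-wise to the aggregation step $\overline{\nabla_{t+1}^i F} = \sum_j W_{ij}^{t+1}\nabla_{t+1}^j F$ yields, after taking conditional expectation, a recursion of the form $\E[g_{t+1}\mid\mathcal F_t] \leq \lambda\,(g_t + r_{t+1})$, where $r_{t+1}$ collects the deviations $\delta f_{t+1}^i - \delta F_{t+1}$ of the gradient increments from their mean.

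The increment is controlled by $L$-smoothness: $\norm{\delta f_{t+1}^i}_2 \leq L\norm{\overline x_{t+1}^i - \overline x_t^i}_2$, and writing $\overline x_{t+1}^i = \sum_j W_{ij}^{t+1} x_{t+1}^j$ with $x_{t+1}^j - \overline x_t^j = \gamma_t(v_t^j - \overline x_t^j)$ bounds this by $(\overline\rho + 2C_p)Lt^{-\alpha}$ \emph{in expectation}, using the diameter estimate $\norm{v_t^j - \overline x_t^j}_2\leq\overline\rho$ and the expectation consensus bound $\max_i\E\norm{\overline x_t^i - \overline x_t}_2 \leq C_p t^{-\alpha}$ from the preceding lemma (the expectation analog of Lemma~\ref{lem:estimate_cp}). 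Substituting this into $r_{t+1}$, taking total expectation via the tower property, and invoking the defining inequality for $t_0$ (of the same form as~\eqref{eq:def_t0}, which guarantees $\lambda\,\tfrac{(t_0)^\alpha+1}{(t_0)^\alpha t^\alpha}\leq (t+1)^{-\alpha}$) closes the induction and produces the constant $C_g = 2\sqrt{N}(t_0)^\alpha(2C_p+\overline\rho)L$.

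The main obstacle is the mismatch between the first-moment contraction $\E\norm{(W^t - \tfrac{1}{N}\one\one^\top)x}\leq\lambda\norm{x}$ of Assumption~\ref{assum:mixing_matrix_sequence_expectation}, stated for a single vector, and the squared-sum quantity $g_t^2 = \sum_i\norm{\overline{\nabla_t^i F}-\overline{\nabla_t F}}_2^2$ handled in the deterministic proof. Reconciling the two requires either extending the contraction to the Frobenius norm of the deviation matrix and applying it column-wise before recombining the cross terms under expectation, or working directly with the $L^1$ recursion for $g_t$ and using Minkowski's inequality to separate the contracting part from the increment $r_{t+1}$. The delicate point is that $\delta f_{t+1}^i$ is itself $W^{t+1}$-measurable, so its bound must be taken either uniformly via the diameter $\overline\rho$ or under the conditional expectation \emph{before} the contraction is applied, otherwise the recursion does not telescope cleanly.
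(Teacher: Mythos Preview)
Your proposal is correct and follows exactly the approach indicated in the paper, which gives no separate proof for this stochastic lemma but merely states that the argument is analogous to the deterministic Lemma~\ref{lem:estimate_cg}. You in fact go beyond the paper by explicitly flagging the two genuine subtleties (the single-vector first-moment contraction of Assumption~\ref{assum:mixing_matrix_sequence_expectation} versus the Frobenius-norm recursion, and the $W^{t+1}$-measurability of $\delta f_{t+1}^i$) that the paper's one-line remark glosses over.
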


\rev{
Аналогично случаю с детерминированно изменяющейся матрицей, подытожим результаты.
\begin{corollary}
	Для достижения точности $\varepsilon$, т.е.
	\begin{subequations}\label{eq:def_accuracy_stoch}
		\begin{align}
			\E\sbraces{\frac{1}{N} \sum_{i=1}^N f_i(x_N^i)} - f(x^*)\leq \varepsilon, \\
			\max_{i\in[N]} \E\norm{\overline x_N^i - \overline x_N}_2\leq \sqrt\varepsilon,
		\end{align}
	\end{subequations}
\end{corollary}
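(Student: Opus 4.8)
План состоит в том, чтобы свести утверждение к уже доказанным стохастическим теореме и двум леммам настоящего раздела, в точности повторив выкладку из детерминированного случая. Поскольку все эти результаты уже сформулированы в терминах математического ожидания, никаких дополнительных вероятностных рассуждений не требуется: вся работа по усреднению невязок от консенсуса спрятана в стохастическом предположении о сжатии. Первым шагом я зафиксирую $\alpha = 1$ и положу $t_0 = \lceil 2/(1-\lambda)\rceil$, после чего проверю допустимость этого $t_0$: при $\alpha = 1$ правая часть условия на $\lambda$ равна $\cbraces{t_0/(t_0+1)}^2 = \cbraces{1 - 1/(t_0+1)}^2$, а выбор $t_0\geq 2/(1-\lambda)$ даёт $1/(t_0+1)\leq(1-\lambda)/2$, откуда $\cbraces{1 - 1/(t_0+1)}^2\geq\cbraces{(1+\lambda)/2}^2\geq\lambda$, причём последнее неравенство эквивалентно $(1-\lambda)^2\geq 0$. Тем самым условие выполнено и обе стохастические леммы дают $\Delta p_t = C_p/t$, $\Delta d_t = C_g/t$ с теми же константами $C_p = t_0\sqrt{N}\overline{\rho}$ и $C_g = 2\sqrt{N}\,t_0(2C_p+\overline{\rho})L$, что и в детерминированном случае.

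Далее я подставлю эти значения в оценку стохастической теоремы. Так как $t_0 = O(\chi)$ с $\chi = 1/(1-\lambda)$, применяется та же оценка, что и в замечании перед настоящим разделом, а именно $LC_p + C_g = O(NL\overline{\rho}^2\chi^2)$; тогда правая часть теоремы есть $O\cbraces{NL\overline{\rho}^2\chi^2/(t+1)}$ (множитель $N$ узлов здесь трактуется как константа задачи). Первое условие точности получается инвертированием этой оценки $O(1/(t+1))$: достаточно взять $t+1$ не меньше числителя $8\overline{\rho}(C_g + LC_p) + 2L\overline{\rho}^2$, делённого на $\varepsilon$, что даёт требование $N = O\cbraces{\chi^2 L\overline{\rho}^2/\varepsilon}$ итераций; при необходимости переход от усреднённой точки к локальным итерациям $x_N^i$ в первом условии точности делается через $L$-гладкость с добавкой консенсусной невязки $\Delta p_N$, вносящей лишь младшие члены. Второе (консенсусное) условие обеспечивается напрямую первой стохастической леммой: из $\max_i\E\norm{\overline{x}_t^i - \overline{x}_t}_2\leq C_p/t$ требование не превзойти $\sqrt{\varepsilon}$ выполняется уже при $t\geq C_p/\sqrt{\varepsilon} = O\cbraces{\chi\sqrt{N}\,\overline{\rho}/\sqrt{\varepsilon}}$.

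Остаётся взять максимум двух полученных требований на число итераций, что и завершает доказательство оценкой $N = O\cbraces{\frac{1}{(1-\lambda)^2}\frac{L\overline{\rho}^2}{\varepsilon}}$, совпадающей по форме с детерминированным следствием. Я не ожидаю здесь глубоких препятствий — утверждение по существу сводится к аккуратной подстановке констант и инвертированию оценки $O(1/t)$. Единственная содержательная тонкость, за которой нужно проследить, — это то, что консенсусное условие не доминирует: его зависимость от точности есть $1/\sqrt{\varepsilon}$ против $1/\varepsilon$ у условия на значение функции, поэтому при малых $\varepsilon$ определяющим оказывается именно второе. Замечу, что именно ради этого доминирования в формулировке следствия допуск по консенсусу ослаблен до $\sqrt{\varepsilon}$; если бы он остался равным $\varepsilon$, пришлось бы дополнительно сравнивать $\chi\sqrt{N}\,\overline{\rho}/\varepsilon$ с $\chi^2 L\overline{\rho}^2/\varepsilon$, что, впрочем, при малых $\varepsilon$ также не меняет порядка ответа.
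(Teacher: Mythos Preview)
Твоё предложение корректно и по существу совпадает с подходом статьи: там стохастическое следствие выводится \emph{«аналогично случаю с детерминированно изменяющейся матрицей»}, то есть выбором $t_0 = \lceil 2/(1-\lambda)\rceil$, подстановкой получающихся констант $C_p,\,C_g$ в оценку теоремы и инвертированием зависимости $O(1/t)$. Твоя проверка допустимости $t_0$ и замечание о недоминировании консенсусного условия даже аккуратнее, чем в статье, которая ограничивается ссылкой на детерминированный случай без явных вычислений.
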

\noindent необходимо $$N = O\cbraces{\frac{1}{(1 - \lambda)^2} \frac{L\overline\rho^2}{\varepsilon}}$$ итераций. При использовании консенсусной процедуры необходимо
	\begin{align*}
	N_{comm} = O\cbraces{\frac{1}{1 - \lambda} \frac{L\overline\rho^2}{\varepsilon}}
\end{align*}
коммуникационных шагов и
\begin{align*}
	N_{orcl} = O\cbraces{\frac{L\overline\rho^2}{\varepsilon}}
\end{align*}
локальных вызовов линейного минимизационного оракула на каждом узле.
}

\section{Численные эксперименты}

Задача лассо-регрессии (Least Absolute Shrinkage and Selection Operator) - разновидность задачи линейной регрессии, метод регуляризации линейной модели (L1 регуляризация). В LASSO к целевой функции добавляется штраф на сумму абсолютных значений параметров модели, что приводим к тому, что коэффициенты признаком с наименьшей информативностью уменьшаются, или, часто, приравиваются к нулю, что отличает метод L1 регуляризации от классической L2 регуляризации.

\begin{figure}[ht]
	\centering
	\includegraphics[width=6cm]{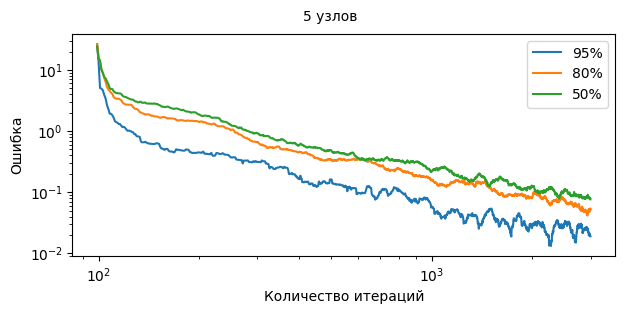}
	\includegraphics[width=6cm]{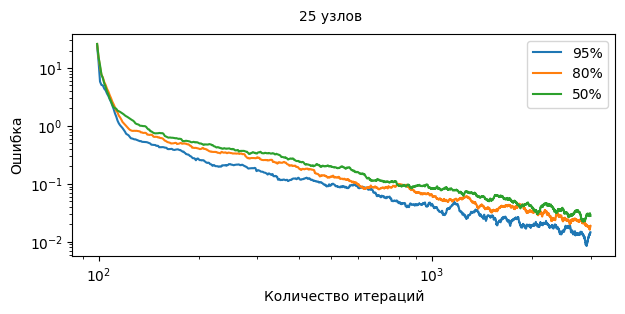}
	\includegraphics[width=6cm]{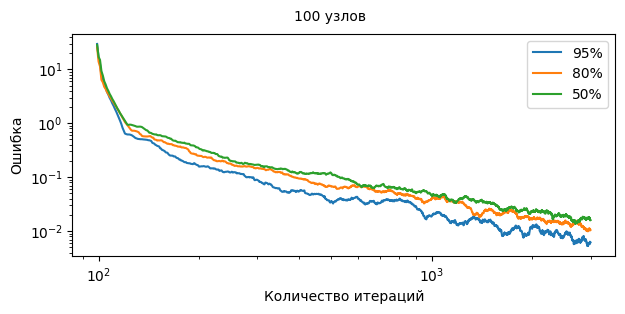}
	
	\caption{Синей кривой обозначен график для $p=0.95$, оранжевым - для $p=0.8$, и зеленым - для $p=0.5$. Заметим, что для всех значений вероятностей выполняется $p > \log N / N$, что почти наверное гарантирует связность графа \cite{raigor}.}
	\label{Fig1}
\end{figure}

Использование регуляризации позволяет произвести отбор наиболее важных признаков модели, сделав ее более интерпретируемой.

Ставится задача так: есть выборка из $n$ наблюдений переменной, значения наблюдений записаны в векторе $y$, в матрице $A$ - значения признаков, в векторе $x$ - параметры (веса) модели, пусть $\theta$ - штраф на сложность модели.

Тогда задача выглядит следующим образом:
\begin{equation}
    F_{LASSO}(x) = \dfrac{1}{2} \|y - Ax\|_2^2 + \theta \|x\|_1 \longrightarrow \min
\end{equation}
Такая постановка задачи эквивалентна задаче минимизации квадратичного функционала на симплексе, что упрощает ее решение.
\begin{gather*}
        F_{LASSO}(x) = \dfrac{1}{2} \|y - Ax\|_2^2  \longrightarrow \min \\
        \text{s.t. } \|x\|_1 \leq t.
\end{gather*}
Таким образом, действие оракула алгоритма Франк-Вульфа для задачи LASSO можно описать таким образом: алгоритм считает градиент в точке и двигается в сторону, противоположному направлению наибольшей по модулю компоненты градиента.

Опишем гипотезы, которые были проверены при моделировании алгоритма.
Во-первых, проверялась зависимость скорости сходимости алгоритма при степенях разреженности графа. Чтобы это сделать, мы воспользовались моделью Эрдеша-Реньи генерации случайных графов, которая заключается в том, что каждое возможное ребро графа генерируется с вероятностью $p$. 

Итак, для разных значений $p$ (а значит и для разной разреженности графов) была измерена скорость сходимости алгоритма для одной и той же задачи для $N=5, N=25, N=100$ (рис. \ref{Fig1}, \ref{Fig2}).


Кроме того, была смоделирована ситуация, когда граф, который инициализируется изначально, не генерируется заново на каждой итерации, а меняется слабо, причем количество ребер в нем остается неизменным. 

\begin{figure}[ht]
    \centering
    \includegraphics[width=6cm]{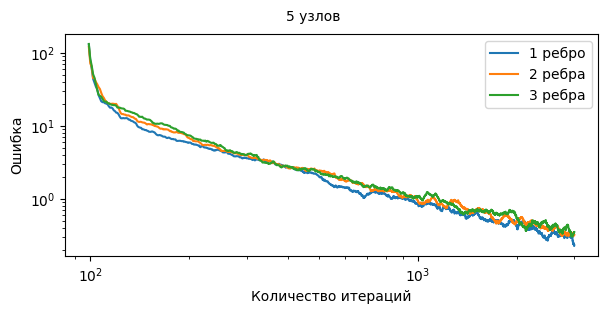}
    \includegraphics[width=6cm]{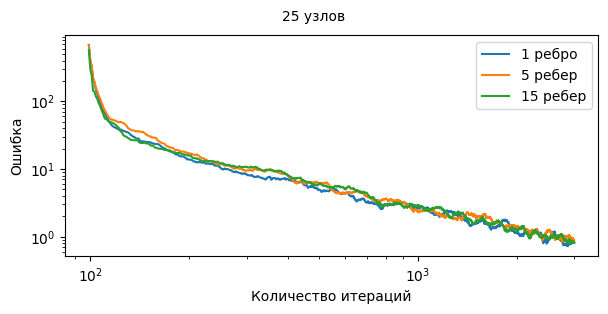}

    \caption{Изначальный граф сгенерирован с вероятностью появления ребер $p = 0.5$}
    \label{Fig2}
\end{figure}

Как видно из графиков, не удается обнаружить зависимости скорости сходимости от количества замененных ребер.

\section{Заключение}

В данной работе был рассмотрен метод Франк-Вульфа на переменных во времени графах. С теоретической точки зрения, было рассмотрено два режима изменения графа: детерминированная и стохастическая последовательность графов. Для обоих случаев показано, что алгоритм сходится со скоростью порядка $O(1/t)$, где $t$ -- номер итерации. Также были проведены численные эксперименты, подтверждающие теоретические результаты.

Данная работа поддержана грантом Российского Научного Фонда (проект No. 23-11-00229), \url{https://rscf.ru/en/project/23-11-00229/}.

\end{document}